\titleformat{\chapter}[display]
{\normalfont\huge\bfseries}{\chaptertitlename\\thechapter}{20pt}{\Huge}
\titleformat{\subsubsection}[runin]
{\normalfont\normalsize\bfseries}{\thesubsubsection}{1em}{}
\titleformat{\paragraph}[runin]
{\normalfont\normalsize\bfseries}{\theparagraph}{1em}{}
\titleformat{\subparagraph}[runin]
{\normalfont\normalsize\bfseries}{\thesubparagraph}{1em}{}
\titlespacing*{\chapter} {0pt}{50pt}{40pt}
\titlespacing*{\section} {0pt}{3.5ex plus 1ex minus .2ex}{2.3ex plus .2ex}
\titlespacing*{\subsection} {0pt}{3.25ex plus 1ex minus .2ex}{1.5ex plus .2ex}
\titlespacing*{\subsubsection}{0pt}{3.25ex plus 1ex minus .2ex}{1.5ex plus .2ex}
\titlespacing*{\paragraph} {0pt}{3.25ex plus 1ex minus .2ex}{1em}
\titlespacing*{\subparagraph} {\parindent}{3.25ex plus 1ex minus .2ex}{1em}
\keywords{Jacobian Conjecture}
\subjclass[2010]{Primary 14R15; Secondary 13F20}
\newtheorem{theorem}{Theorem}[section]
\newtheorem{lemma}[theorem]{Lemma}
\newtheorem{proposition}[theorem]{Proposition}
\theoremstyle{definition}
\newtheorem{notation}[theorem]{Notation}
\theoremstyle{remark}
\newtheorem{remark}[theorem]{Remark}
\DeclareMathOperator{\Supp}{Supp}
\DeclareMathOperator{\en}{en}
\DeclareMathOperator{\st}{st}
\DeclareMathOperator{\Succ}{Succ}
\DeclareMathOperator{\Dir}{Dir}
\newcommand{\ov}{\overline}
\begin{document}

\title{A short and elementary proof of Jung's theorem}

\author{Jorge A. Guccione}
\address{Departamento de Matem\'atica\\ Facultad de Ciencias Exactas y Naturales-UBA,
Pabell\'on~1-Ciudad Universitaria\\ Intendente Guiraldes 2160 (C1428EGA) Buenos Aires, Argentina.}
\address{Instituto de Investigaciones Matem\'aticas ``Luis A. Santal\'o"\\ Facultad de
Ciencias Exactas y Natu\-ra\-les-UBA, Pabell\'on~1-Ciudad Universitaria\\ Intendente
Guiraldes 2160 (C1428EGA) Buenos Aires, Argentina.}
\email{vander@dm.uba.ar}
\thanks{Supported by  UBACYT 095, PIP 112-200801-00900 (CONICET) and PUCP-DGI-2013-3036}

\author{Juan J. Guccione}
\address{Departamento de Matem\'atica\\ Facultad de Ciencias Exactas y Naturales-UBA\\
Pabell\'on~1-Ciudad Universitaria\\ Intendente Guiraldes 2160 (C1428EGA) Buenos Aires, Argentina.}
\address{Instituto Argentino de Matem\'atica-CONICET\\ Savedra 15 3er piso\\ (C1083ACA)
Buenos Aires, Argentina.}
\email{jjgucci@dm.uba.ar}
\thanks{Supported by  UBACYT 095 and PIP 112-200801-00900 (CONICET)}

\author[C. Valqui]{Christian Valqui}
\address{Pontificia Universidad Cat\'olica del Per\'u, Secci\'on Matem\'aticas, PUCP,
Av. Universitaria 1801, San Miguel, Lima 32, Per\'u.}
\address{Instituto de Matem\'atica y Ciencias Afines (IMCA) Calle Los Bi\'ologos 245.
Urb San C\'esar. La Molina, Lima 12, Per\'u.}
\email{cvalqui@pucp.edu.pe}
\thanks{Christian Valqui was supported by PUCP-DGI-2013-3036}

\subjclass[2010]{primary 13F25; secondary 13P15}

\begin{abstract} We give a short and elementary proof of Jung's theorem, which states that for
a field $K$ of characteristic zero the
automorphisms of $K[x,y]$ are generated by elementary automorphisms and linear automorphisms.
\end{abstract}

\maketitle

\section*{Introduction} The theorem of Jung~\cite{Ju} states that if $K$  is a field of characteristic
zero, then any automorphism of $L:=K[x,y]$ is the finite composition of elementary automorphisms
(given by $x\mapsto x,\ y\mapsto y+p(x)$ or by $y\mapsto y,\ x\mapsto x+p(y)$) and linear
automorphisms. Many authors have given proofs of this fact, for example in~\cite{AM},
 \cite{Gu}, \cite{MW}, \cite{N}, \cite{Ng}, \cite{R} and~\cite{vdK}, the last and very short and
elegant proof in~\cite{ML}, which works in every algebraically closed field of characteristic zero.
The key step in the proof of~\cite{ML} is the same as in ours:
In the situation of the figure,
$$
\begin{tikzpicture}[scale=0.25]
\draw[step=1cm,gray,very thin] (-0.3,-0.3) grid (25.3,14.3);
\draw (0,4) node[fill=white, above=0pt, right=0pt]{$\scriptstyle (\rho, \sigma)$};
\draw (11,5.5) node[fill=white, above=0pt, right=0pt]{$\scriptstyle \ell_{\rho,\sigma}(P)$};
\draw[dotted] (0,0) --(15.5,15.5);
\draw [->] (-1,0)--(26,0) node[anchor=north]{$x$};
\draw [->] (0,-1)--(0,15) node[anchor=east]{$y$};
\draw [->] (0,0)--(1,3);
\draw [-] (0,8)--(24,0);
\draw [-] (1,1)--(4,0);
\draw (-2.6,-1.4) node[ above=0pt, right=0pt]{$\scriptstyle F$};
\draw[->] (-1.2, -1.4) ..controls (0,-1.4) and (1.8,-0.6) .. (2.2,0.4);
\end{tikzpicture}
$$
there exists a polynomial $F$ (called $\zeta$ in~\cite{ML}) such that $F=\mu x(y+\lambda x^{\sigma})$ and
$[F,\ell_{\rho,\sigma}(P)]=\ell_{\rho,\sigma}(P)$. Then we apply $\varphi$ given by
$\varphi(x):=x$ and $\varphi(y):=y-\lambda x^{\sigma}$, and obtain $\deg(\varphi(P))<\deg(P)$.

Here $[P,Q]$ stands for the
determinant of the jacobian matrix of two polynomials $P,Q$ and $\ell_{\rho,\sigma}(P)$ is the leading form of
$P$ with respect to the weight $(\rho,\sigma)$.  To our knowledge our proof is the shortest and simplest,
(except the proof of~\cite{ML}), and Theorem~\ref{central} is the only fact that we use that is not
straightforward or elementary. The element $F$ can be traced back to 1975 in~\cite{J}. In order to
obtain a proof for a field that is not necessarily algebraically closed, we have to prove that for a
polynomial automorphism there can be only one point at infinity, which we do in
Proposition~\ref{un factor en el infinito}.

\section{Preliminaries}
We first gather notations and results of~\cite{GGV}.  We define the set of directions by
\begin{equation*}
\mathfrak{V} := \{(\rho,\sigma)\in \mathds{Z}^2: \text{$\gcd(\rho,\sigma) = 1$} \}.
\end{equation*}
For all $(\rho,\sigma)\in \mathfrak{V}$ and $(i,j)\in \mathds{Z}\times
\mathds{Z}$ we write $v_{\rho,\sigma}(i,j):= \rho i+\sigma j$ and for
$P = \sum a_{i,j} x^{i} y^j\in L\setminus\{0\}$, we define
\begin{itemize}

\smallskip

\item[-] The {\em support} of $P$ as $\Supp(P) := \left\{\left(i,j\right): a_{i,j}\ne
    0\right\}$.

\smallskip

\item[-] The {\em $(\rho,\sigma)$-degree} of $P$ as $v_{\rho,\sigma}(P):= \max\left\{ v_{\rho,\sigma}
    (i,j): a_{i,j} \ne 0\right\}$.

\smallskip

\item[-] The {\em $(\rho,\sigma)$-leading term} of $P$ as $\ell_{\rho,\sigma}(P):= \displaystyle
    \sum_{\{\rho i + \sigma j = v_{\rho,\sigma}(P)\}} a_{i,j} x^{i} y^j$.

\smallskip

\end{itemize}

 We say that $P\in L$ is {\em
$(\rho,\sigma)$-homogeneous} if $P = \ell_{\rho,\sigma}(P)$.
 We assign to each direction its corresponding unit vector in $S^1$, and we define an
{\em interval} in $\mathfrak{V}$ as the preimage under this map of an arc of $S^1$ that is not
the whole circle. We consider each interval endowed with the order that increases counterclockwise.

For each $P\in L\setminus \{0\}$, we let $H(P)$ denote the
 {\em Newton polygon of $P$}, and it is evident that each one
of its edges is the convex hull of the support
of $\ell_{\rho,\sigma} (P)$, where $(\rho,\sigma)$ is orthogonal to the given edge and points
outside of $H(P)$. This directions form the set
$$
\Dir(P):=\{(\rho,\sigma)\in\mathfrak{V}:\#\Supp(\ell_{\rho,\sigma}(P))>1\}.
$$

\begin{notation}\label{Comienzo y Fin de un elemento de W} Let $(\rho,\sigma)\in \mathfrak{V}$ arbitrary. We
let $\st_{\rho,\sigma}(P)$ and $\en_{\rho,\sigma}(P)$ denote the first and the last point that we find on
$H(\ell_{\rho,\sigma}(P))$ when we run counterclockwise along the boundary of $H(P)$. Note that these points
coincide when $\ell_{\rho,\sigma}(P)$ is a monomial.
\end{notation}

We say that two vectors $A,B\in\mathds{R}^2$ are
{\em aligned} and write $A\sim B$, if 
$0=A\times B:=\det\left(\begin{smallmatrix} a_1 & a_2\\ b_1 & b_2 \end{smallmatrix}\right)$

\begin{proposition}[\cite{GGV}*{Proposition 2.4}]\label{extremosnoalineados} Let $P,Q,R\in L\setminus \{0\}$ be such that
$$
[\ell_{\rho,\sigma}(P),\ell_{\rho,\sigma}(Q)]=\ell_{\rho,\sigma}(R),
$$
where $(\rho,\sigma)\in \mathfrak{V}$. We have:

\begin{enumerate}

\smallskip

\item $\st_{\rho,\sigma}(P)\nsim\st_{\rho,\sigma}(Q)$ if and only if $\st_{\rho,\sigma}(P)
    +\st_{\rho,\sigma}(Q) - (1,1) =\st_{\rho,\sigma}(R)$.

\smallskip

\item $\en_{\rho,\sigma}(P)\nsim\en_{\rho,\sigma}(Q)$ if and only if
    $\en_{\rho,\sigma}(P)+\en_{\rho,\sigma}(Q)-(1,1)=\en_{\rho,\sigma}(R)$.

\end{enumerate}

\end{proposition}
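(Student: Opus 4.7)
The plan is the following. Set $A:=\ell_{\rho,\sigma}(P)$ and $B:=\ell_{\rho,\sigma}(Q)$, and introduce the auxiliary linear form $T(i,j):=-\sigma i+\rho j$. Since the edge of a Newton polygon with outward normal $(\rho,\sigma)$ is traversed counterclockwise in the direction $(-\sigma,\rho)$, the function $T$ is strictly increasing along this edge from $\st_{\rho,\sigma}(\cdot)$ to $\en_{\rho,\sigma}(\cdot)$; in particular $\st_{\rho,\sigma}(P)$ is the \emph{unique} minimizer of $T$ on $\Supp(A)$, and analogously for $Q$. Verifying this orientation convention against Notation~\ref{Comienzo y Fin de un elemento de W} is the one piece of bookkeeping that requires care.

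Expanding the jacobian bracket yields
\begin{equation*}
[A,B]=\sum_{\substack{(i_1,j_1)\in\Supp(A)\\ (i_2,j_2)\in\Supp(B)}} a_{i_1,j_1}b_{i_2,j_2}(i_1j_2-j_1i_2)\,x^{i_1+i_2-1}y^{j_1+j_2-1}.
\end{equation*}
The identity $T(I-1,J-1)=T(i_1,j_1)+T(i_2,j_2)-T(1,1)$ for $(I,J)=(i_1+i_2,j_1+j_2)$, together with the uniqueness of the minimizers above, shows that the smallest value of $T$ attained by any exponent appearing on the right-hand side equals $T(\st_{\rho,\sigma}(P))+T(\st_{\rho,\sigma}(Q))-T(1,1)$, reached at the single exponent $\st_{\rho,\sigma}(P)+\st_{\rho,\sigma}(Q)-(1,1)$. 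Because no other pair from the two supports contributes to that monomial, its coefficient is exactly
\begin{equation*}
a_{\st_{\rho,\sigma}(P)}\,b_{\st_{\rho,\sigma}(Q)}\,\bigl(\st_{\rho,\sigma}(P)\times\st_{\rho,\sigma}(Q)\bigr).
\end{equation*}

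Since $a_{\st_{\rho,\sigma}(P)}$ and $b_{\st_{\rho,\sigma}(Q)}$ are nonzero by definition, this coefficient vanishes precisely when $\st_{\rho,\sigma}(P)\sim\st_{\rho,\sigma}(Q)$. When it is nonzero, the point $\st_{\rho,\sigma}(P)+\st_{\rho,\sigma}(Q)-(1,1)$ lies in $\Supp([A,B])=\Supp(\ell_{\rho,\sigma}(R))$ and, being the unique $T$-minimum there, must coincide with $\st_{\rho,\sigma}(R)$; conversely, if $\st_{\rho,\sigma}(R)$ equals that sum, then the coefficient above is nonzero, forcing $\st_{\rho,\sigma}(P)\nsim\st_{\rho,\sigma}(Q)$. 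This proves~(1), and~(2) follows by the same argument with $-T$ in place of $T$, swapping minima for maxima and $\st$ for $\en$. The only real subtlety is the orientation check that places $\st$ at the $T$-minimum; everything else is immediate from the bracket formula and linearity of $T$.
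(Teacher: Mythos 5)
Your proof is correct, and it is essentially the expected argument: the paper itself does not prove this proposition (it is quoted from \cite{GGV}*{Proposition 2.4}), and the proof there likewise rests on bilinearity of the bracket, the cross-product coefficient $a_{\st_{\rho,\sigma}(P)}b_{\st_{\rho,\sigma}(Q)}\bigl(\st_{\rho,\sigma}(P)\times\st_{\rho,\sigma}(Q)\bigr)$ of the extremal monomial, and the uniqueness of the extremal pair along the edge. Your orientation check (counterclockwise traversal of the edge with outward normal $(\rho,\sigma)$ goes in the direction $(-\sigma,\rho)$, so $\st$ minimizes and $\en$ maximizes $T$) is the right convention and matches how $\st_{1,1}$ and $\en_{1,1}$ are used later in the paper, so I see no gap.
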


\begin{remark}\label{F no es monomio} Let $(\rho,\sigma)\in \mathfrak{V}$ and let
$P,F\in L$ be $(\rho,\sigma)$-homogeneous such that $[F,P] = P$. If $F$ is a
monomial, then $F = \lambda xy$ with $\lambda\in K^{\times}$, and, either $\rho+\sigma = 0$
or $P$ is also a monomial.
\end{remark}
The following theorem is an important tool in the constructions of~\cite{GGV}. It is the only result
we use that is not straightforward.
\begin{theorem}[\cite{GGV}*{Theorem 2.6}]\label{central} Let $P\in L$ and let $(\rho,\sigma)\in \mathfrak{V}$ be such that
$\rho+\sigma>0$ and $v_{\rho,\sigma}(P)>0$. If $[P,Q]\in K^{\times}$
for some $Q\in L$, then there exists  a $(\rho,\sigma)$-ho\-mo\-ge\-neous element $F\in L$ such that
\begin{equation}\label{eq central}
v_{\rho,\sigma}(F)=\rho+\sigma\quad \text{and}\quad [F,\ell_{\rho,\sigma}(P)]= \ell_{\rho,\sigma}(P).
\end{equation}
\end{theorem}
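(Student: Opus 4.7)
My plan has three main parts---a leading-form computation, a structural reduction for $p:=\ell_{\rho,\sigma}(P)$, and an explicit construction of $F$. Throughout, set $q:=\ell_{\rho,\sigma}(Q)$ and $d:=v_{\rho,\sigma}(P)>0$. The first step is to take $(\rho,\sigma)$-leading forms in the identity $[P,Q]=c\in K^{\times}$: since the constant $c$ has $v_{\rho,\sigma}$-degree zero, one gets the dichotomy that either $[p,q]=0$ (through cancellation in the leading Jacobian), or $[p,q]=c$ itself and comparing degrees forces $d+v_{\rho,\sigma}(q)=\rho+\sigma$. In both cases $p$ and $q$ form a pair of $(\rho,\sigma)$-homogeneous polynomials whose Jacobian lies in $K$.

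The core structural step is to pin down the shape of $p$: I would prove that $p=\lambda G^{m}$ for some $\lambda\in K^{\times}$, integer $m\geq 1$, and $(\rho,\sigma)$-homogeneous polynomial $G$ whose Newton support consists of at most two lattice points on the line $\rho i+\sigma j=v_{\rho,\sigma}(G)$. This would be obtained by combining Euler's identity $\rho x\,p_{x}+\sigma y\,p_{y}=d\,p$ with the constraint $[p,q]\in K$, using Proposition~\ref{extremosnoalineados} at both the $\st_{\rho,\sigma}$ and $\en_{\rho,\sigma}$ endpoints of $H(P)$ (and their counterparts for $H(Q)$) to eliminate any three-or-more-monomial shape for $G$, which otherwise produces incompatible alignment relations at the endpoints.

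Once the form $p=\lambda G^{m}$ is in place, the desired $F$ is constructed by a short ansatz. Because $[F,\cdot]$ is a derivation, the product rule $[F,G^{m}]=mG^{m-1}[F,G]$ reduces the equation $[F,p]=p$ to $[F,G]=\mu G$ for some $\mu\in K^{\times}$; then $F/(m\lambda\mu)$ is adjusted to match. Taking $F$ of $v_{\rho,\sigma}$-degree $\rho+\sigma$, whose Newton support is likewise at most two points (for instance $\{(1,1),(1+\sigma,0)\}$ when $\rho=1$, $\{(1,1),(0,1+\rho)\}$ when $\sigma=1$, and $\{(1,1)\}$ when both $\rho,\sigma\geq 2$), yields a small linear system in the coefficients of $F$ that is solvable by inspection for the admissible shapes of $G$. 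The pure monomial case $G=\alpha xy$ (which can occur only when $\rho,\sigma\geq 2$) is excluded from arising under the hypotheses by combining the structural step with Remark~\ref{F no es monomio}.

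The main obstacle is the structural step: controlling the length of the leading edge of $H(P)$ and forcing $p$ to be a pure power of a short-support polynomial requires careful iterative application of Proposition~\ref{extremosnoalineados} to successive truncations of $P$ and $Q$, combined with an Euler-identity bookkeeping argument to exclude intermediate monomials. Once $G$ is known to have at most two monomials, the subsequent construction of $F$ is a routine linear algebra computation guided by the explicit shape of $G$.
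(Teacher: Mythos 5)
First, note that the paper does not prove this statement at all: it is imported verbatim from \cite{GGV}*{Theorem 2.6} and is explicitly flagged as ``the only result we use that is not straightforward.'' So any proof you give must stand entirely on its own, and yours does not. The decisive gap is your ``structural step.'' You claim that $p:=\ell_{\rho,\sigma}(P)$ must equal $\lambda G^{m}$ with $\#\Supp(G)\le 2$, and you propose to get this from Euler's identity, the condition $[p,q]\in K$, and Proposition~\ref{extremosnoalineados} applied at the endpoints. This cannot work as described. The leading-form condition $[p,q]\in K$ alone is far too weak: taking $q=p$ gives $[p,q]=0$ for \emph{every} $(\rho,\sigma)$-homogeneous $p$, and the standard Euler-identity argument only yields $p^{e}=\mu q^{d}$, hence $p=\nu R^{d'}$ for some homogeneous $R$ with \emph{no} control on $\#\Supp(R)$. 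Any genuine proof must exploit the lower-order terms of $P$ and $Q$ through an inductive reduction (e.g.\ replacing $Q$ by $Q^{d'}-\mu P^{e'}$ and controlling how $v_{\rho,\sigma}$ of the bracket and of the new mate evolve); you gesture at ``successive truncations'' but supply no mechanism and no termination argument. Moreover, Proposition~\ref{extremosnoalineados} only constrains the two \emph{endpoints} $\st_{\rho,\sigma}$ and $\en_{\rho,\sigma}$ of the leading edge; it says nothing about interior lattice points of that edge, so it cannot ``eliminate any three-or-more-monomial shape'' in the middle.

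The structural claim is also stronger than what is true. The conclusion of the theorem is equivalent (for $\rho=1$, say) to $p=\mu x^{a}(y+\lambda x^{\sigma})^{b}$ for suitable $a,b$, i.e.\ a \emph{monomial times} a power of a binomial; for instance $p=x^{2}(y+\lambda x^{\sigma})^{3}$ admits $F=x(y+\lambda x^{\sigma})$ and is perfectly compatible with the theorem, yet it is not $\lambda G^{m}$ with $\#\Supp(G)\le 2$ for any $m\ge 1$. So even the target of your structural step is miscalibrated, and your subsequent reduction $[F,G^{m}]=mG^{m-1}[F,G]$ would miss the admissible mixed forms. Two smaller points: in the subcase $[p,q]=c\in K^{\times}$ you do not need any structure theory, since $F:=-\tfrac{1}{c}\,pq$ already satisfies $[F,p]=p$ and $v_{\rho,\sigma}(F)=\rho+\sigma$; and your use of Remark~\ref{F no es monomio} to ``exclude'' the case $\rho,\sigma\ge 2$ is misdirected --- in that case $F=\mu xy$ is forced and is legitimate, the remark merely telling you that $\ell_{\rho,\sigma}(P)$ must then be a monomial $x^{i}y^{j}$ with $i\ne j$. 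As it stands, the heart of the theorem is asserted rather than proved.
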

 If $I$
is an interval in $\mathfrak{V}$ and if there is no closed half circle contained in $I$,  then for  all $(\rho,\sigma),(\rho_1\sigma_1)\in I$
we have
\begin{equation}\label{order}
(\rho_1,\sigma_1)<(\rho,\sigma) \Longleftrightarrow (\rho_1,\sigma_1)\times (\rho,\sigma)>0.
\end{equation}

\begin{proposition}[\cite{GGV}*{Proposition 3.7}]\label{le basico} Let $P\!\in\! L\!\setminus\!\{0\}$ and let
$(\rho_1,\sigma_1)$ and $(\rho_2,\sigma_2)$ be consecutive elements in $\Dir(P)$. If $(\rho_1,\sigma_1)<
(\rho,\sigma) <(\rho_2,\sigma_2)$, then \ $\en_{\rho_1,\sigma_1}(P) = \Supp(\ell_{\rho,\sigma}(P)) =
\st_{\rho_2,\sigma_2}(P)$.
\end{proposition}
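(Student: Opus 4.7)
The plan is to interpret everything geometrically in terms of the Newton polygon $H(P)$. Since $(\rho_1,\sigma_1)$ and $(\rho_2,\sigma_2)$ are consecutive in $\Dir(P)$ and $(\rho,\sigma)$ lies strictly between them, no direction in $\Dir(P)$ equals $(\rho,\sigma)$, so $\#\Supp(\ell_{\rho,\sigma}(P))=1$. Let $V$ denote this unique support point; equivalently, $V$ is the unique maximizer of $v_{\rho,\sigma}$ on $\Supp(P)$. The proposition thus reduces to identifying $V$ with a specific vertex of $H(P)$.

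Next I would show that the edges $E_1,E_2$ of $H(P)$ with outward normals $(\rho_1,\sigma_1),(\rho_2,\sigma_2)$ share a common vertex $B$. As one runs counterclockwise along $\partial H(P)$, the outward normal sweeps counterclockwise through $\mathfrak{V}$, taking values in $\Dir(P)$ precisely on the edges and intermediate values at the vertices. Consecutiveness of $(\rho_1,\sigma_1)$ and $(\rho_2,\sigma_2)$ thus forces $E_1$ to be immediately followed by $E_2$; their common endpoint is a vertex $B$ and, by the very definitions of $\en$ and $\st$, one has $B=\en_{\rho_1,\sigma_1}(P)=\st_{\rho_2,\sigma_2}(P)$. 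It remains to prove $V=B$.

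For the last step I would exploit the local cone of $H(P)$ at $B$. Rotating an outward normal of the convex region $H(P)$ by $-90^{\circ}$ yields the tangent direction of the counterclockwise boundary, so $E_1$ and $E_2$ emanate from $B$ in the directions $(\sigma_1,-\rho_1)$ and $(-\sigma_2,\rho_2)$ respectively. Convexity then lets one write every $(i,j)\in\Supp(P)\subset H(P)$ as $B+\alpha(\sigma_1,-\rho_1)+\beta(-\sigma_2,\rho_2)$ with $\alpha,\beta\ge 0$, and a direct computation yields
\[
v_{\rho,\sigma}(i,j)-v_{\rho,\sigma}(B)=-\alpha\bigl((\rho_1,\sigma_1)\times(\rho,\sigma)\bigr)-\beta\bigl((\rho,\sigma)\times(\rho_2,\sigma_2)\bigr).
\]
By~(\ref{order}) both cross products are strictly positive, so the right-hand side is $\le 0$ with equality only when $\alpha=\beta=0$; thus $B$ is the unique maximizer of $v_{\rho,\sigma}$ on $\Supp(P)$, giving $V=B$.

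The main obstacle I foresee is the convex-geometry bookkeeping that underlies this picture: verifying the orientation so that $(\sigma_1,-\rho_1)$ and $(-\sigma_2,\rho_2)$ really point into $H(P)$ from $B$ (equivalently, that the decomposition above has $\alpha,\beta\ge 0$). This uses both the ordering~(\ref{order}) on $\mathfrak{V}$ and the elementary fact that the outward normal of a counterclockwise boundary of a convex region is the tangent rotated by $-90^{\circ}$. Once these are settled, the remainder of the argument is the one-line computation above.
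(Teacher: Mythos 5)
Your argument is correct and is essentially the intended one: the paper itself does not prove this proposition but imports it from \cite{GGV}*{Proposition 3.7}, and the underlying proof is exactly this convexity computation at the common vertex $B$ of the two edges with outward normals $(\rho_1,\sigma_1)$ and $(\rho_2,\sigma_2)$. The one detail you should make explicit is that invoking \eqref{order} to get the positivity of both cross products requires the arc from $(\rho_1,\sigma_1)$ to $(\rho_2,\sigma_2)$ to contain no closed half circle; this holds because consecutive outward normals of the convex set $H(P)$ differ by the exterior angle at their common vertex, which never exceeds $\pi$.
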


\begin{proposition}[\cite{GGV}*{Proposition 3.10}]\label{varphi preserva el Jacobiano} Let $P,Q\in L$ and
$\varphi\colon L\to L$ an algebra morphism. Then
\begin{equation}\label{Chain rule for Jacobian}
[\varphi(P),\varphi(Q)]=\varphi([P,Q])[\varphi(x),\varphi(y)].
\end{equation}
\end{proposition}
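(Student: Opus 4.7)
The plan is to reduce the identity to the chain rule for formal partial derivatives. Write $u:=\varphi(x)$ and $v:=\varphi(y)$, so that $\varphi(R)=R(u,v)$ for every $R\in L$. Applying $\partial_x$ and $\partial_y$ to $\varphi(P)$ via the chain rule gives
$$
\partial_x\varphi(P)=\varphi(\partial_x P)\,\partial_x u+\varphi(\partial_y P)\,\partial_x v,
$$
and the analogous formula for $\partial_y\varphi(P)$; the same applies to $Q$. These four equations express the Jacobian matrix of $(\varphi(P),\varphi(Q))$ as a product of two $2\times 2$ matrices:
$$
\begin{pmatrix}\partial_x\varphi(P)&\partial_y\varphi(P)\\ \partial_x\varphi(Q)&\partial_y\varphi(Q)\end{pmatrix}=\begin{pmatrix}\varphi(\partial_x P)&\varphi(\partial_y P)\\ \varphi(\partial_x Q)&\varphi(\partial_y Q)\end{pmatrix}\begin{pmatrix}\partial_x u&\partial_y u\\ \partial_x v&\partial_y v\end{pmatrix}.
$$
Taking determinants and using the multiplicativity of $\det$ together with the fact that $\varphi$ is an algebra morphism (so the determinant of the middle factor equals $\varphi([P,Q])$) yields precisely $[\varphi(P),\varphi(Q)]=\varphi([P,Q])[\varphi(x),\varphi(y)]$.

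The only step requiring a separate justification is the chain rule itself, which for formal polynomial partial derivatives follows by $K$-bilinearity and the Leibniz rule: it suffices to verify it on monomials $R=x^iy^j$, where a direct computation yields $\partial_x(u^iv^j)=iu^{i-1}v^j\,\partial_x u+ju^iv^{j-1}\,\partial_x v=\varphi(\partial_x R)\,\partial_x u+\varphi(\partial_y R)\,\partial_x v$, and analogously for $\partial_y$. No step presents a genuine obstacle; the whole argument is a tidy matrix-theoretic repackaging of the chain rule, and the factor $[\varphi(x),\varphi(y)]$ appears naturally as the Jacobian determinant of the "inner map" $(x,y)\mapsto(u,v)$.
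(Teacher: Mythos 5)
Your proof is correct: the chain rule for formal partial derivatives (verified on monomials via the Leibniz rule) gives the matrix factorization of the Jacobian of $(\varphi(P),\varphi(Q))$, and multiplicativity of the determinant together with $\varphi$ being an algebra morphism yields the identity. The paper itself gives no proof, citing \cite{GGV}*{Proposition 3.10}, and your argument is the standard one that such a reference would contain, so there is nothing to add.
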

\bigskip
\section{Jung's Theorem}
\begin{lemma}\label{esquina en el eje}
Let $f:K[x,y]\to K[x,y]$ be an automorphism, set $P:=f(x)$ and
assume $(a,b)\in\{\en_{1,1}(P),\st_{1,1}(P)\}$. Then $a=0$ or $b=0$.
\end{lemma}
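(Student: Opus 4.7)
The plan is: since $f$ is an automorphism, setting $Q := f(y)$ and applying Proposition~\ref{varphi preserva el Jacobiano} to the identity $f \circ f^{-1} = \mathrm{id}$ yields $[P, Q] \in K^{\times}$; moreover $P$ is non-constant so $v_{1,1}(P) > 0$. By Theorem~\ref{central} at $(\rho, \sigma) = (1, 1)$, there exists a $(1,1)$-homogeneous $F \in L$ with $v_{1,1}(F) = 2$ satisfying $[F, \ell_{1,1}(P)] = \ell_{1,1}(P)$. Writing $F = \alpha x^2 + \beta xy + \gamma y^2$, we have $\Supp(F) \subseteq \{(2,0), (1,1), (0,2)\}$, and Remark~\ref{F no es monomio} rules out $F = \alpha x^2$ and $F = \gamma y^2$.

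Apply Proposition~\ref{extremosnoalineados}(1) to the identity $[F, \ell_{1,1}(P)] = \ell_{1,1}(P)$: either $\st_{1,1}(F) \sim \st_{1,1}(P) = (a, b)$, or $\st_{1,1}(F) = (1, 1)$. The viable candidates for $\st_{1,1}(F)$ in $\Supp(F)$ are $\{(2,0), (1,1)\}$, since $\st_{1,1}(F) = (0, 2)$ would force $F = \gamma y^2$, a forbidden monomial. In the first alternative, $(2, 0) \sim (a, b)$ yields $b = 0$. Proposition~\ref{extremosnoalineados}(2) provides the symmetric dichotomy for $\en_{1,1}(P) = (a', b')$: either $\en_{1,1}(F) = (0, 2)$, forcing $a' = 0$, or $\en_{1,1}(F) = (1, 1)$.

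The main obstacle is the case where $\st_{1,1}(F) = (1, 1)$ or $\en_{1,1}(F) = (1, 1)$, most extremely when both hold and $F = \lambda xy$ is a monomial; by Remark~\ref{F no es monomio}, $\ell_{1,1}(P) = \mu x^a y^b$ is then a monomial with $\lambda(b-a) = 1$, and Proposition~\ref{extremosnoalineados} yields no direct information. My approach is to apply Theorem~\ref{central} a second time at the direction $(\rho_2, \sigma_2) \in \Dir(P)$ immediately counterclockwise of $(1,1)$; by Proposition~\ref{le basico}, this vertex $(a,b)$ equals $\st_{\rho_2, \sigma_2}(P)$. Since $(\rho_2, \sigma_2) \in \Dir(P)$, $\ell_{\rho_2, \sigma_2}(P)$ is not a monomial, so by Remark~\ref{F no es monomio} the resulting $F_2$ cannot be a monomial either. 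A direct inspection of the integer solutions to $\rho_2 i + \sigma_2 j = \rho_2 + \sigma_2$ with $i, j \geq 0$ shows that $F_2$ non-monomial forces $(\rho_2, \sigma_2) = (1, \sigma_2)$ with $\sigma_2 \geq 2$, giving $F_2 = A xy + B x^{1+\sigma_2}$ with $A, B \in K^{\times}$ and $\st_{\rho_2, \sigma_2}(F_2) = (1 + \sigma_2, 0)$. Proposition~\ref{extremosnoalineados}(1) applied to $F_2$ then forces $(1 + \sigma_2, 0) \sim (a, b)$, i.e.\ $b = 0$. A symmetric argument at the direction of $\Dir(P)$ immediately clockwise of $(1,1)$ yields $a = 0$. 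The degenerate case where $P$ is itself a monomial (so $\Dir(P) = \emptyset$) is handled directly: a monomial $\mu x^a y^b$ with $a, b \geq 1$ cannot satisfy $[P, Q] \in K^{\times}$, since substituting $x = 0$ or $y = 0$ in $[P, Q]$ yields zero.
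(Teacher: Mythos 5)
Your first stage is sound: obtaining $[P,Q]\in K^{\times}$, invoking Theorem~\ref{central} at $(1,1)$ to get $F=\alpha x^2+\beta xy+\gamma y^2$, and reading off $b=0$ (resp.\ $a'=0$) from Proposition~\ref{extremosnoalineados} whenever $\st_{1,1}(F)=(2,0)$ (resp.\ $\en_{1,1}(F)=(0,2)$). The gap is in how you dispose of the remaining case $\st_{1,1}(F)=(1,1)$ or $\en_{1,1}(F)=(1,1)$. You apply Theorem~\ref{central} a second time at $(\rho_2,\sigma_2)=\Succ_P(1,1)$ and claim a ``direct inspection'' forces $F_2=Axy+Bx^{1+\sigma_2}$. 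That inspection is only valid when $(1,1)<(\rho_2,\sigma_2)<(0,1)$, i.e.\ $\sigma_2>\rho_2>0$, and nothing in your argument rules out the other possibilities for the successor. If $(\rho_2,\sigma_2)=(0,1)$ the solution set of $\rho_2 i+\sigma_2 j=\rho_2+\sigma_2$ is $\{(i,1):i\ge 0\}$, so $F_2=y\,q(x)$ and Proposition~\ref{extremosnoalineados}(1) only yields $a=b\deg q$, not $b=0$ (this is exactly what happens when $H(P)$ has a horizontal top edge ending at a point with both coordinates positive, e.g.\ a rectangular Newton polygon). If $(\rho_2,\sigma_2)=(-1,2)$ the equation $-i+2j=1$ has infinitely many nonnegative solutions $(2j-1,j)$ and $F_2$ need not contain any monomial with $j=0$ at all. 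For successors such as $(-1,1)$ or $(-2,1)$ one has $\rho_2+\sigma_2\le 0$, and $v_{\rho_2,\sigma_2}(P)>0$ can also fail, so Theorem~\ref{central} cannot even be invoked; you never verify either hypothesis for the second (or for the symmetric clockwise) application. There is also a bookkeeping slip: by Proposition~\ref{le basico} it is $\en_{1,1}(P)$, not $\st_{1,1}(P)$, that equals $\st_{\rho_2,\sigma_2}(P)$ when $(1,1)\in\Dir(P)$. Excluding the bad successors essentially amounts to already knowing that the top edge of $H(P)$ ends on a coordinate axis, which is the statement being proved, so as written the patch is close to circular.

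For contrast, the paper's proof avoids Theorem~\ref{central} entirely and is much shorter: assuming $a\ge b>0$ for $(a,b)=\st_{1,1}(P)$, it sets $R_0:=x$, $R_{j+1}:=[R_j,P]$, and shows inductively via Proposition~\ref{extremosnoalineados}(1) that
\begin{equation*}
\st_{1,1}(R_k)=(1,0)+k(a,b)-k(1,1)\nsim(a,b),\qquad\text{since }\bigl((1,0)+k(a,b)-k(1,1)\bigr)\times(a,b)=b+k(a-b)\ne 0,
\end{equation*}
so $R_k\ne 0$ for all $k$; this contradicts the fact that $[\,\cdot\,,f(x)]$ is, up to the unit $[f(x),f(y)]$, conjugate under $f$ to $-\partial/\partial y$ and hence locally nilpotent. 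You may find it more profitable to look for an argument of that flavour than to complete the successor-direction case analysis.
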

\begin{proof}
We will proof only the case $(a,b)=\st_{1,1}(P)$, since the argument in the other case is the same.
Assume $a\ge b>0$. We set $R_0:=x$ and $R_{j+1}:=[R_j,P]$. Then
$\st_{1,1}(R_0)=(1,0)\nsim (a,b)=\st_{1,1}(P)$ and so, by Proposition~\ref{extremosnoalineados}(1)
$$
\st_{1,1}(R_1)=(1,0)+(a,b)-(1,1)\nsim (a,b).
$$
Increasing $k$ and using Proposition~\ref{extremosnoalineados}(1), one obtains inductively that
$$
\st_{1,1}(R_k)=(1,0)+k(a,b)-k(1,1)\nsim (a,b),
$$
since $((1,0)+k(a,b)-k(1,1))\times (a,b)=b+k(a-b)\ne 0$. Consequently
$R_k\ne0$ for all $k$, which contradicts the fact that $R_n=0$ for $n>>0$.

If $b\ge a>0$, then we set $R_0:=y$ and $R_{j+1}:=[R_j,P]$ and the same argument yields a contradiction.
Hence $a=0$ or $b=0$, as desired.
\end{proof}
The next proposition shows that for an automorphism $f$, there can be only one factor at infinity, 
or equivalently,
that $\ell_{1,1}(f(x))$ is the power of one linear factor.
\begin{proposition}\label{un factor en el infinito}
Let $f:K[x,y]\to K[x,y]$ be an automorphism and set $P:=f(x)$. Then
$$
\Supp(\ell_{1,1}(P)=\{(a,0)\},\quad \Supp(\ell_{1,1}(P)=\{(0,a)\}\quad \text{or}\quad
\ell_{1,1}(P)=\mu(x-\lambda y)^a,
$$
for some  $\mu,\lambda\in K^{\times}$, where $a=v_{1,1}(P)$.
\end{proposition}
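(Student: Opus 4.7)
The plan is to dispose of the easy cases quickly and then rule out everything else by a linear change of variables together with Lemma~\ref{esquina en el eje}. Set $A:=\ell_{1,1}(P)$ and $a:=v_{1,1}(P)$. By Lemma~\ref{esquina en el eje} both $\st_{1,1}(P)$ and $\en_{1,1}(P)$ lie on the coordinate axes, and because they both sit on the line $i+j=a$ the only options are $(a,0)$ and $(0,a)$. If $A$ is a monomial the two points coincide, yielding the first two alternatives of the statement; so henceforth I assume $A$ is not a monomial, in which case both $x^a$ and $y^a$ appear in $A$ with nonzero coefficients.

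Since $f$ is an automorphism we have $[P,f(y)]\in K^{\times}$, and together with $v_{1,1}(P)=a>0$ this allows me to apply Theorem~\ref{central} to obtain a $(1,1)$-homogeneous $F$ of weighted degree $2$ with $[F,A]=A$. By Remark~\ref{F no es monomio} and the non-monomial assumption, $F$ is not a monomial, so over $\overline K$ I can factor $F=c\,uv$ with $u,v$ linear forms. If $u\parallel v$ then $F$ is a scalar multiple of the square of a single linear form, and a short calculation shows that $D:=[F,\cdot]$ is nilpotent on every homogeneous component, contradicting $D(A)=A\neq 0$. Hence $u,v$ are linearly independent, and one computes $D(u)=-\kappa u$, $D(v)=\kappa v$ with $\kappa:=c(u\times v)\neq 0$; consequently the monomials $u^iv^{a-i}$ diagonalize $D$ on degree-$a$ homogeneous polynomials with pairwise distinct eigenvalues $(a-2i)\kappa$, and $D(A)=A$ pins $A$ down to $A=\mu'u^{i_0}v^{a-i_0}$ for a unique $i_0\in\{0,\dots,a\}$.

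The main obstacle is to exclude $0<i_0<a$. For this I would introduce the linear automorphism $\phi$ of $\overline K[x,y]$ defined by $\phi(x)=u$, $\phi(y)=v$, form $g:=\phi^{-1}\circ(f\otimes_K\overline K)$, and apply Lemma~\ref{esquina en el eje} to $g$: since a linear change of variables preserves total degree,
\begin{equation*}
\ell_{1,1}(g(x))=\phi^{-1}(A)=\mu'\,x^{i_0}y^{a-i_0},
\end{equation*}
whose unique support point $(i_0,a-i_0)$ would lie in the open first quadrant, a contradiction. (A small caveat: one must check that Lemma~\ref{esquina en el eje} remains valid over $\overline K$, but its proof uses only iterated brackets with $P$ and the existence of $f^{-1}$, both preserved by base extension.) Therefore $i_0\in\{0,a\}$, so $A$ is an $a$-th power of a single linear form over $\overline K$; since both $x^a$ and $y^a$ appear in $A$ with nonzero coefficient, that linear form is of the shape $\alpha(x-\lambda y)$ with $\alpha\neq 0\neq\lambda$, and comparing the coefficients of $x^a$ and $x^{a-1}y$ in $A\in K[x,y]$ finally yields $A=\mu(x-\lambda y)^a$ with $\mu,\lambda\in K^{\times}$.
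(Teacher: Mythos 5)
Your proof is correct, but in the hard (non-monomial) case it takes a genuinely different route from the paper's. The paper never invokes Theorem~\ref{central} in this proposition: after reducing to $\ov{K}$ it writes $\ell_{1,1}(P)=x^ap(z)$ with $z=x^{-1}y$, uses Lemma~\ref{esquina en el eje} once (on $\en_{1,1}(P)$) to force $\deg p\in\{0,a\}$, and then, if $p$ had a root $\lambda_{i_0}$ of multiplicity $m_{i_0}$ strictly between $0$ and $a$, applies the shear $y\mapsto y+\lambda_{i_0}x$ and uses Lemma~\ref{esquina en el eje} a second time, on $\st_{1,1}(\varphi(P))=(a-m_{i_0},m_{i_0})$, to reach a contradiction; this directly forces $p$ to be a power of one linear factor. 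You instead import Theorem~\ref{central} --- the one non-elementary ingredient of the whole paper, which the paper deliberately reserves for the main theorem --- to produce $F$ of weight $2$, diagonalize $[F,\cdot\,]$ on the degree-$a$ homogeneous component to pin $\ell_{1,1}(P)$ down to $\mu'u^{i_0}v^{a-i_0}$, and only then run the ``linear substitution plus Lemma~\ref{esquina en el eje}'' argument to exclude $0<i_0<a$. All of your steps check out: the nilpotency of $[cu^2,\cdot\,]$ on homogeneous components, the eigenvalues $(a-2i)\kappa$ being pairwise distinct in characteristic zero, the observation that Lemma~\ref{esquina en el eje} survives base change to $\ov{K}$ (the paper itself silently relies on this when it assumes $K$ algebraically closed), and the descent of $\mu,\lambda$ to $K^{\times}$ by comparing the coefficients of $x^a$ and $x^{a-1}y$ --- indeed you are more explicit about that last descent than the paper is. What the paper's version buys is economy: the proposition stays entirely elementary, needing only Lemma~\ref{esquina en el eje} and a shear. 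What yours buys is a structural explanation --- the adjoint action of $F$ forces the leading form to be a monomial in the two linear factors of $F$ --- at the price of pulling in the paper's deepest tool earlier than necessary.
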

\begin{proof} We can assume that $K$ is algebraically closed.
Let $a=v_{1,1}(P)>0$ and write $\ell_{1,1}(P)=x^a p(z)$, where $z:=x^{-1}y$ and $p(z)\in K[z]$.
If $0<b:=\deg(p(z))<a$, then $\en_{1,1}(P)=(a,0)+b(-1,1)=(a-b,b)$ which contradicts
Lemma~\ref{esquina en el eje}. If $\deg(p)=0$, then $\Supp(\ell_{1,1}(P)=\{(a,0)\}$, so it
suffices to consider the case $\deg(p(z))=a$. If neither
$$
\Supp(\ell_{1,1}(P)=\{(0,a)\}\quad \text{nor}\quad
\ell_{1,1}(P)=\mu(x-\lambda y)^a,
$$
then $p(z)=\mu \displaystyle\prod_{i=1}^{k}(z-\lambda_i)^{m_i}$ has a root $\lambda_{i_0}$
with multiplicity $0<m_{i_0}<a$. But then the automorphism $\varphi$ given by $\varphi(x):=x$ and
$\varphi(y):=y+\lambda_{i_0}x$ yields
$$
\ell_{1,1}(\varphi(P))=\varphi(\ell_{1,1}(P))= x^a p(z+\lambda_{i_0})=
\mu x^az^{m_{i_0}}\prod_{{i=1}\atop i\ne i_0}^{k}(z-\ov{\lambda}_i)^{m_i},
$$
where $\ov{\lambda}_i=\lambda_i-\lambda_{i_0}$ and
$\displaystyle\prod_{{i=1}\atop i\ne i_0}^{k}\ov{\lambda}_i^{m_i}\ne 0$.
This implies
$$
\st_{1,1}(\varphi(P))=(a,0)+m_{i_0}(-1,1)=(a-m_{i_0},m_{i_0}),
$$
where $a-m_{i_0}\ne 0$ and $m_{i_0}\ne 0$, which contradicts Lemma~\ref{esquina en el eje}
and concludes the proof.
\end{proof}

\begin{theorem}
Let $f:K[x,y]\to K[x,y]$ be an automorphism. Then
$f$ is the composition of elementary automorphisms and linear automorphisms.
\end{theorem}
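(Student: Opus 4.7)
The plan is to induct on $a := v_{1,1}(f(x))$. For the base case $a=1$, the polynomial $P:=f(x)$ is affine in $x,y$, so after composing $f$ with a suitable affine automorphism (itself a product of a linear and an elementary automorphism), I reduce to the case $f(x)=x$. Since $f$ is then a $K[x]$-algebra automorphism of $K[x][y]$, $f(y)$ must be of the form $\alpha y+p(x)$ with $\alpha\in K^\times$ and $p\in K[x]$, and $f$ decomposes as the composition of the elementary automorphism $y\mapsto y+\alpha^{-1}p(x)$ and the linear automorphism $y\mapsto\alpha y$.

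For the inductive step $a\ge 2$, set $P:=f(x)$. By Proposition~\ref{un factor en el infinito}, $\ell_{1,1}(P)$ equals one of $\mu x^a$, $\mu y^a$, or $\mu(x-\lambda y)^a$; after replacing $f$ by $\psi\circ f$ for a suitable linear automorphism $\psi$ (the swap or $x\mapsto x+\lambda y$), I may assume $\ell_{1,1}(P)=\mu x^a$. Because $a\ge 2$ and the Jacobian condition $[P,f(y)]\in K^\times$ precludes $P\in K[x]$, the Newton polygon $H(P)$ admits an edge $E$ adjacent to the vertex $(a,0)$ going up and to the left; let $(\rho,\sigma)$ be its outer normal, so that $\rho,\sigma>0$. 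Since every support point of $P$ has $(1,1)$-degree at most $a$, the other endpoint of $E$ lies strictly below the line $i+j=a$, which forces $\sigma>\rho$, i.e.\ $(\rho,\sigma)>(1,1)$.

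The heart of the argument is to apply Theorem~\ref{central} to this $(\rho,\sigma)$: there exists a $(\rho,\sigma)$-homogeneous $F$ of $(\rho,\sigma)$-degree $\rho+\sigma$ with $[F,\ell_{\rho,\sigma}(P)]=\ell_{\rho,\sigma}(P)$. Enumerating the monomials $x^iy^j$ of $(\rho,\sigma)$-degree $\rho+\sigma$ with $(i,j)\in\mathds{Z}_{\ge 0}^2$ (using $\gcd(\rho,\sigma)=1$), only $xy$ survives unless $\rho=1$ or $\sigma=1$. If we had $F=c_{1,1}xy$ alone, the equation $[xy,Q]=Q$ for $Q=\ell_{\rho,\sigma}(P)$ combined with the Euler identity $\rho x\partial_xQ+\sigma y\partial_yQ=v_{\rho,\sigma}(Q)\,Q$ would force both $x\partial_xQ$ and $y\partial_yQ$ to be scalar multiples of $Q$, making $Q$ a monomial and contradicting $(\rho,\sigma)\in\Dir(P)$. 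Since $\sigma>\rho\ge 1$ forbids $\sigma=1$, this yields $\rho=1$ and $\sigma\ge 2$, whence $F=c_{1,1}xy+c_{1+\sigma,0}x^{1+\sigma}=\mu'x(y+\lambda x^\sigma)$ with $\lambda:=c_{1+\sigma,0}/c_{1,1}$ and $\mu':=c_{1,1}$. Rewriting $[F,Q]=Q$ in terms of $z:=y+\lambda x^\sigma$ and separating variables then forces $\ell_{1,\sigma}(P)=\beta x^{a-\sigma c}(y+\lambda x^\sigma)^c$ for some integer $c\ge 1$ with $a-\sigma c\ge 0$.

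To close the induction, apply the elementary automorphism $\varphi(x)=x$, $\varphi(y)=y-\lambda x^\sigma$. Since $\varphi$ preserves the $(1,\sigma)$-grading and sends $y+\lambda x^\sigma$ to $y$, we get $\ell_{1,\sigma}(\varphi(P))=\beta x^{a-\sigma c}y^c$, a single monomial of $(1,1)$-degree $a-c(\sigma-1)<a$. The remaining terms of $\varphi(P)$ have $(1,\sigma)$-degree below $a$ and hence $(1,1)$-degree below $a$ as well (using $\sigma\ge 1$), so $v_{1,1}(\varphi(P))<a$. The inductive hypothesis applied to $\varphi\circ\psi\circ f$ expresses this composition as a product of elementary and linear automorphisms, and so does $f=\psi^{-1}\circ\varphi^{-1}\circ(\varphi\circ\psi\circ f)$. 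The main obstacle is the structural analysis of $F$ in the previous paragraph; once the shape $F=\mu x(y+\lambda x^\sigma)$ and the factorization of $\ell_{1,\sigma}(P)$ are in hand, the rest is bookkeeping.
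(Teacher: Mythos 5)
Your argument is correct and follows essentially the same route as the paper: reduce via Proposition~\ref{un factor en el infinito} to $\ell_{1,1}(P)=\mu x^a$, take the edge direction $(\rho,\sigma)$ with $\sigma>\rho$ adjacent to $(a,0)$, use Theorem~\ref{central} and the support analysis to get $F=\mu x(y+\lambda x^{\sigma})$ (forcing $\rho=1$), and apply $y\mapsto y-\lambda x^{\sigma}$ to drop $v_{1,1}$. The only differences are cosmetic — you phrase the descent as induction on $a$, rule out $P\in K[x]$ directly from the Jacobian condition instead of via Proposition~\ref{le basico}, and determine the shape of $\ell_{\rho,\sigma}(P)$ by separation of variables before applying $\varphi$ rather than after (do also note, as in Remark~\ref{F no es monomio}, that $F$ cannot be the pure monomial $c\,x^{1+\sigma}$, so that $c_{1,1}\neq 0$).
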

\begin{proof} Set $P:=f(x)$. If $\deg(P)=1$, then we can assume $P=x$, and then
$f(y)=\lambda y+q(x)$; it follows that $f$ is the composition of elementary automorphisms and linear
automorphisms.

Therefore it suffices to  prove that either $\deg(P)=v_{1,1}(P)=1$ or there exists an elementary
automorphism $\varphi$ such that $\deg (\varphi(P))<\deg(P)$. By Proposition~\ref{un factor en el infinito}
we can assume
$$
\Supp(\ell_{1,1}(P))=\{(a,0)\}
$$
for some $a\in\mathds{N}$. In fact, if $\Supp(\ell_{1,1}(P)=\{(0,a)\}$
we apply the flip, which is a composition of elementary automorphisms, and if
$\ell_{1,1}(P)=\mu(x-\lambda y)^a$, then we apply the elementary automorphism
given by $x\mapsto x+\lambda y$ and $y\mapsto y$.

We also can assume that $P$ is not a monomial and so we set
$(\rho,\sigma):=\Succ_{P}(1,1)$, the {\em successor} of $(1,1)$, which
is the first element of $\Dir(P)$ that one encounters starting from $(1,1)$ and
running counterclockwise.

If $(\rho,\sigma)\ge (0,1)$, then from
Proposition~\ref{le basico} we obtain $(a,0)=\st_{0,1}(P)$ and then for all $(i,j)\in \Supp(P)$
we have $j=v_{0,1}(i,j)\le v_{0,1}(a,0)=0$, hence $P\in K[x]$. It follows easily that
$1=\deg(P)$.

It remains to consider the case $(1,1)<(\rho,\sigma)<(0,1)$, which implies $\sigma>\rho>0$.
By Theorem~\ref{central} there exist
a $(\rho,\sigma)$-homogenous element $F\in K[x,y]$ such that
$$
[F,\ell_{\rho,\sigma}(P)]=\ell_{\rho,\sigma}(P)\quad\text{and}\quad v_{\rho,\sigma}(F)=\rho+\sigma.
$$
For all $(i,j)\in \Supp(F)$ we have $\rho i+\sigma j=\rho+\sigma$ and
so
$$
(1-i)\rho=(j-1)\sigma.
$$
Hence $j>1$ is impossible and if $j=1$, then $i=1$. Since $\ell_{\rho,\sigma}(P)$ is not a monomial,
we know from Remark~\ref{F no es monomio} that $F$ has at least two points in its support,
hence $(1,1)\in \Supp(F)$ and there must be a point of the form $(i,0) \in \Supp(F)$.
But then $\sigma=(i-1)\rho$ and we obtain $\rho=1$, since $\rho$  and $\sigma$ are coprime.
We obtain that $F=\mu x(y+\lambda x^{\sigma})$ for some $\mu,\lambda\in K^{\times}$
and $\ell_{\rho,\sigma}(P)=x^ap(z)$ for some $p(z)\in K[z]$
with $N:=\deg(p(z))>0$, where $z:=y x^{-\sigma}$.

Consider now the elementary  automorphism $\varphi$ given by
$\varphi(x):=x$ and $\varphi(y):=y-\lambda x^{\sigma}$.
Since  $\varphi$ is $(\rho,\sigma)$-homogenous we have
$\varphi(\ell_{\rho,\sigma}(P))=\ell_{\rho,\sigma}(\varphi(P))$ and so, by
Proposition~\ref{varphi preserva el Jacobiano},
we obtain
$$
[\varphi(F),\ell_{\rho,\sigma}(\varphi(P))]=[\varphi(F),\varphi(\ell_{\rho,\sigma}(P))]
=\varphi(\ell_{\rho,\sigma}(P))=\ell_{\rho,\sigma}(\varphi(P)).
$$

Moreover $\varphi(F)=\mu x y$ is a monomial, hence $\ell_{\rho,\sigma}(\varphi(P))$ is also a monomial.
 It follows that $\ell_{\rho,\sigma}(\varphi(P))=\mu_p x^a z^N$, with $N=\deg(p(z))>0$, since
$\ell_{\rho,\sigma}(\varphi(P))=\varphi(x^ap(z))=x^a p(z-\lambda)$ (Note that $\varphi(z)=z-\lambda$).
 Therefore we arrive at $(a,0)\not\in\Supp(\varphi(P))$.

Now, for $(i,j)\in \Supp(\varphi(P))$ we have
$$
v_{1,1}(i,j)=i+j\le i+\sigma j=v_{\rho,\sigma}(i,j)\le v_{\rho,\sigma}(\varphi(P))=v_{\rho,\sigma}(P)
=v_{\rho,\sigma}(a,0)=a=v_{1,1}(P),
$$
and the equality would be possible only if $j=0$ and $i=a$, but we have $(i,j)\ne(a,0)$. Hence $v_{1,1}(\varphi(P))<v_{1,1}(P)$, as desired.
\end{proof}
\begin{bibdiv}
\begin{biblist}

\bib{AM}{article}{
   author={Abhyankar, Shreeram S.},
   author={Moh, Tzuong Tsieng},
   title={Embeddings of the line in the plane},
   journal={J. Reine Angew. Math.},
   volume={276},
   date={1975},
   pages={148--166},
   issn={0075-4102},
   review={\MR{0379502 (52 \#407)}},
}
	
\bib{GGV}{article}{
author={Guccione, Jorge Alberto},
author={Guccione, Juan Jos\'e},
author={Valqui, Christian},
   title={On the shape of possible counterexamples to the Jacobian conjecture},
   journal={arXiv:1401.1784 [math.AC]},
   date={8 Jan 2014},
   }

\bib{Gu}{article}{
   author={Gutwirth, A.},
   title={An inequality for certain pencils of plane curves},
   journal={Proc. Amer. Math. Soc.},
   volume={12},
   date={1961},
   pages={631--638},
   issn={0002-9939},
   review={\MR{0126759 (23 \#A4053)}},
}

\bib{J}{article}{
   author={Joseph, A.},
   title={The Weyl algebra -- semisimple and nilpotent elements},
   journal={American Journal of Mathematics},
   volume={97},
   date={1975},
   pages={597--615},
   issn={0002-9327},
   review={\MR{0379615 (52 :520)}},
}

\bib{Ju}{article}{
   author={Jung, Heinrich W. E.},
   title={\"Uber ganze birationale Transformationen der Ebene},
   language={German},
   journal={J. Reine Angew. Math.},
   volume={184},
   date={1942},
   pages={161--174},
   issn={0075-4102},
   review={\MR{0008915 (5,74f)}},
}

\bib{ML}{article}{
   author={Makar-Limanov, Leonid},
   title={On the Newton polygon of a Jacobian mate},
   journal={Max-Planck-Institut für Mathematik
Preprint Series 2013 (53)},
}

\bib{MW}{article}{
   author={McKay, James H.},
   author={Wang, Stuart Sui Sheng},
   title={An elementary proof of the automorphism theorem for the polynomial
   ring in two variables},
   journal={J. Pure Appl. Algebra},
   volume={52},
   date={1988},
   number={1-2},
   pages={91--102},
   issn={0022-4049},
   review={\MR{949340 (89k:14017)}},
   doi={10.1016/0022-4049(88)90137-5},
}

\bib{N}{book}{
   author={Nagata, Masayoshi},
   title={On automorphism group of $k[x,\,y]$},
   note={Department of Mathematics, Kyoto University, Lectures in
   Mathematics, No. 5},
   publisher={Kinokuniya Book-Store Co., Ltd., Tokyo},
   date={1972},
   pages={v+53},
   review={\MR{0337962 (49 \#2731)}},
}

\bib{Ng}{article}{
   author={Nguyen Van Chau},
   title={A simple proof of Jung's theorem on polynomial automorphisms of
   $\bold C^2$},
   journal={Acta Math. Vietnam.},
   volume={28},
   date={2003},
   number={2},
   pages={209--214},
   issn={0251-4184},
   review={\MR{1999457 (2004h:14068)}},
}

\bib{R}{article}{
   author={Rentschler, Rudolf},
   title={Op\'erations du groupe additif sur le plan affine},
   language={French},
   journal={C. R. Acad. Sci. Paris S\'er. A-B},
   volume={267},
   date={1968},
   pages={A384--A387},
   review={\MR{0232770 (38 \#1093)}},
}

\bib{vdK}{article}{
   author={van der Kulk, W.},
   title={On polynomial rings in two variables},
   journal={Nieuw Arch. Wiskunde (3)},
   volume={1},
   date={1953},
   pages={33--41},
   issn={0028-9825},
   review={\MR{0054574 (14,941f)}},
}

\end{biblist}
\end{bibdiv}

\end{document}